\documentclass[11pt]{amsart}
\usepackage{amssymb, amscd}
\usepackage{latexsym,epsfig}
\usepackage[all]{xy}
\usepackage{pst-all}
\usepackage{pstcol}
\usepackage[normalem]{ulem}

\usepackage[
	hypertexnames=false,
	hyperindex,
	pagebackref,
	pdftex,
	breaklinks=true,
	bookmarks=false,
	colorlinks,
	linkcolor=blue,
	citecolor=red,
	urlcolor=red,
]{hyperref}

\usepackage[mathscr]{eucal}

\numberwithin{equation}{section}
\def\vandaag{\number\day\space\ifcase\month\or
 januari\or februari\or  maart\or  april\or mei\or juni\or  juli\or
 augustus\or  september\or  oktober\or november\or  december\or\fi,
\number\year}
\def\today{\ifcase\month\or
 Jan\or Febr\or  Mar\or  Apr\or May\or Jun\or  Jul\or
 Aug\or  Sep\or  Oct\or Nov\or  Dec\or\fi
 \space\number\day, \number\year}
 
\newtheorem{theorem}{Theorem}[section]
\newtheorem{lemma}[theorem]{Lemma}

\newtheorem{definition-lemma}[theorem]{Definition-Lemma}

\theoremstyle{definition}

\theoremstyle{remark}
\newtheorem{remark}[theorem]{\bf Remark}

\newcommand{\CC}{\mathbb C}

\newcommand{\EE}{\mathbb E}
\newcommand{\FF}{\mathbb F}

\newcommand{\QQ}{\mathbb Q}
\newcommand{\RR}{\mathbb R}

\newcommand{\ZZ}{\mathbb Z}
\newcommand{\A}{\mathcal A}
\newcommand{\R}{\mathcal R}

\newcommand{\cI}{\mathcal I}

\newcommand{\sF}{\mathscr {F}}
\newcommand{\cH}{\mathcal {H}}
\newcommand{\sH}{\mathscr {H}}

\newcommand{\sZ}{\mathscr {Z}}

\newcommand\Sp{\operatorname{Sp}}
\newcommand\ch{\operatorname{ch}}
\newcommand\Fr{\operatorname{Fr}}

\newcommand\an{\mathrm{an}}
\newcommand\gp{\mathrm{gp}}

\begin{document}
\title[]{Lifting Chern classes by means of Ekedahl-Oort strata}

\begin{abstract}
The moduli space $\A_g$ of principally polarized abelian varieties of genus $g$ 
is defined over $\ZZ$ and admits a minimal compactification $\A_g^*$, 
also defined over $\ZZ$. The Hodge bundle over $\A_g$ has its Chern classes in the Chow ring of $\A_g$ with  $\QQ$-coefficients.
We show that over  $\FF_p$, these  Chern classes naturally lift to  $\A_g^*$ 
and do so in the best possible way: despite the highly singular nature  
of $\A_g^*$ they are represented by algebraic cycles on $\A_g^*\otimes\FF_p$ 
which define elements in the  bivariant Chow ring. This is in contrast 
to the situation in the analytic topology, where these Chern classes have  canonical lifts to complex cohomology of the minimal compactification as Goresky-Pardon classes,  which are known to define nontrivial Tate extensions inside the mixed Hodge structure on this cohomology.
\end{abstract}

\author{Gerard van der Geer}
\address{Korteweg-de Vries Instituut, Universiteit van Amsterdam, Amsterdam, The Netherlands and Yau Mathematical Sciences Center, Tsinghua University Beijing, China.}
\email{g.b.m.vandergeer@uva.nl}

\author{Eduard Looijenga}\thanks{E.L. was supported by the Chinese National Science Foundation.}
\address{Yau Mathematical Sciences Center, Tsinghua University Beijing (China) and Mathematisch Instituut, Universiteit Utrecht (Nederland)}
\email{e.j.n.looijenga@uu.nl}

\keywords{Chern classes,\ Baily-Borel compactification, Ekedahl-Oort strata}
\subjclass{Primary 11G18, 14G35}

\maketitle

\medskip
\begin{section}{Introduction and statement of the main result}
Few objects in algebraic geometry have such a rich structure as the moduli space $\A_g$ of principally polarized
abelian varieties of dimension $g$. Its modular interpretation
makes it  a stack over ${\ZZ}$ and it comes as such with a rank $g$ vector bundle, the \emph{Hodge bundle} ${\EE^g}$
(which we may regard as the basic automorphic bundle over $\A_g$
in the sense that all other such over $\A_g$
are manufactured from it). Its determinant bundle $\det(\EE^g)$  is ample and when $g\ge 2$, the graded algebra of automorphic forms
$\oplus_{N=0}^\infty H^0(\A_g,(\det(\EE^g)^{\otimes N})$ is finitely generated so that its Proj  defines  a natural projective completion  $\A^*_g$ of  $\A_g$.
The complex-analytic space $\A_g^{\an}$
underlying $\mathcal{A}_g\otimes\CC$ has the familiar description as  the quotient of the Siegel upper half space $D_g$ of genus 
 $g$ by the integral symplectic group $\Sp_{2g}(\ZZ)$ and  $\mathcal{A}^*_g{}^{\an}$ is then the Satake-Baily-Borel compactification. 
Since $\A_g$ is a Deligne-Mumford stack, the (operational) Chow ring of  
$\A_g$ with $\QQ$-coefficients, $A_\QQ^\bullet(\A_g)$ is well-defined.
The Chern classes of $\EE^g$ generate a subalgebra $\R_g$ herein (we recall its presentation below). Since every automorphic bundle  over $\A_g$ is universally expressed via a Schur functor 
in terms of its Hodge bundle, $\R_g$ contains the Chern classes of all such bundles.  This is why we  refer to $\R_g$ as the \emph{tautological ring} of 
$\A_g$. 

The cycle map embeds this ring in 
$H^\bullet(\A_g^{\an}; \QQ)$ and its image can be characterized in several ways. One 
is that after tensoring with $\RR$, it is  the subalgebra representable by differential forms whose pull-backs to $D_g$
are $\Sp_{2g}(\RR)$-invariant (such forms are automatically closed).
 Charney and Lee \cite{charneylee:satake} had shown in 1983 that in the stable range (that is, for cohomological degree $<g$) these classes are liftable to $H^\bullet(\A_g^*{}^{\an}; \QQ)$, but Goresky and Pardon \cite{gp} proved in 2002
 that they admit in fact a \emph{natural} lift, provided  we use  the \emph{complex} cohomology of $\A^*_g{}^{\an}$. 
They raised the  question whether their lifts lie in the rational cohomology. The answer to that  question, given  by one of us \cite{L:gp}, 
is in general no. To see why, it is better to use the  Chern characters rather than the Chern classes, for  then the even indexed Chern characters are zero,  so that  the  issue regarding liftability only concerns the odd indexed ones. The answer is then that for $k=2r+1$ odd, the Goresky-Pardon lift $\ch_k^{\gp}(\EE^g)$ of $\ch_k(\EE^g)$ lies in the Hodge space  $F^kH^{2k}(\A^*_g{}^{\an})\subset H^{2k}(\A^*_g{}^{\an}; \CC)$. If we are also in the stable range  $0<k<g/2$, then, as we recall below,   it lies in fact in the complexification of a  mixed Tate substructure  of $H^{2k}(\A^*_g{}^{\an})$: an extension of $\QQ(-k)$ by $\QQ(0)$. This extension is nontrivial 
in the sense  that it is proportional to a standard nontrivial one whose invariant is given by $(2\pi\sqrt{-1})^{-k}\pi^{-k}\zeta (k)$. Since $k$ is odd, this implies that in this range,  
$\ch_k^{\gp}(\EE^g)$ will not even be a real  cohomology class.
 
We noted already back in 2015 that the situation is entirely different for $\A_g\otimes\FF_p$. For this, let us recall that Ekedahl and van der  Geer  \cite{E-vdG1} had proved  
that  $\R_g$ is then generated by the Ekedahl-Oort strata. Our observation at the time was that these strata intersect the boundary  of $\A_g^*\otimes\FF_p$ transversally with respect to its natural stratification (with ``minimal perversity''), which means that these classes  naturally lift  to $\ell$-adic cohomology classes  on $\A_g^*\otimes\FF_p$. 
We then realized that the notion of an $F$-zip, introduced by Moonen and Wedhorn in \cite{M-W} and the classifying space of such as introduced by 
Pink-Wedhorn-Ziegler \cite{P-W-Z1}
 make it fit into an even neater picture.
This classifying space of zips is an Artin stack, denoted  $[E_{\mathcal{Z}}\backslash \Sp_{2g}\otimes\FF_p]$
(we give more details below), which can be regarded as the characteristic  $p$ counterpart of the compact dual $\check{D}_g$ of the Siegel upper space $D_g$.
The Chow ring
$\check{\R}_g:=A_\QQ^\bullet([E_{\mathcal{Z}}\backslash \Sp_{2g}\otimes\FF_p])$ is isomorphic to the one of $\check{D}_g$. 
We have a natural morphism of Artin stacks $\A_g\to [E_{\mathcal{Z}}\backslash \Sp_{2g}\otimes\FF_p]$. It has  the property that it maps $\check{\R}_g$ onto $\R_g$.
Our main  observation now becomes:

\begin{theorem}\label{thm:main}
The morphism $\A_g\otimes\FF_p \to [E_{\mathcal{Z}}\backslash \Sp_{2g}\otimes\FF_p]$ naturally extends to the minimal compactification:
$\A_g^*\otimes\FF_p \to [E_{\mathcal{Z}}\backslash \Sp_{2g}\otimes\FF_p]$ and the induced ring homomorphism 
 $\check{\R}_g\to A^\bullet_\QQ( \A_g^*\otimes\FF_p)$ is an embedding.   
\end{theorem}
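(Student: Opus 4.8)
The plan is to prove the two assertions in turn, treating the extension of the morphism as the substantive geometric input and deducing the injectivity formally from it together with the ampleness of $\det(\EE^g)$. Throughout I write $\lambda_i=c_i(\EE^g)$.

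For the extension I would work through a smooth toroidal compactification $\tilde{\A}_g$ of $\A_g\otimes\FF_p$, equipped with its universal semi-abelian scheme and its proper contraction $\pi\colon \tilde{\A}_g\to\A_g^*\otimes\FF_p$. Over $\tilde{\A}_g$ the canonical (log–de Rham) extension of the $H^1_{\mathrm{dR}}$ of the universal abelian scheme carries an extended Hodge filtration, a conjugate filtration, and a Cartier isomorphism, hence a symplectic $F$-zip of Siegel type; this produces a morphism $\tilde\phi\colon\tilde{\A}_g\to[E_{\mathcal Z}\backslash\Sp_{2g}\otimes\FF_p]$ restricting to the given one over $\A_g$. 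The crux is to show that $\tilde\phi$ factors through $\pi$. This should hold because over a point of the Satake stratum $\A_h\subset\A_g^*$ the associated graded of the degenerating cohomology decomposes as the rank-$2h$ $F$-zip of the abelian part together with a fixed split rank-$2(g-h)$ piece (the weight $0$ and weight $2$ toric Tate pieces), and this graded object — which is all that the $F$-zip records — depends only on the point of $\A_g^*$, not on the extension/period data parametrised by the fibre of $\pi$. Equivalently one may define the target $F$-zip directly on each stratum $\A_h$ as $\Phi_B\oplus\Phi_{\mathrm{tor}}$ and check that these glue algebraically across strata; this is the zip-theoretic incarnation of the transversality of the Ekedahl-Oort strata to the boundary mentioned in the introduction.

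The hard part will be exactly this descent: one must verify, in the presence of the log structures on the toroidal boundary, that the extended filtrations and the Cartier isomorphism are genuinely pulled back along $\pi$ (so that $\tilde\phi$ is constant on the toric fibres of $\pi$), and not merely that they restrict correctly over the open part. Granting it, the resulting $\phi^*\colon\A_g^*\otimes\FF_p\to[E_{\mathcal Z}\backslash\Sp_{2g}\otimes\FF_p]$ is an honest morphism of Artin stacks, so its pullback $\check{\R}_g\to A^\bullet_\QQ(\A_g^*\otimes\FF_p)$ is automatically a ring homomorphism landing in the operational (bivariant) ring — this is precisely the ``best possible'' lifting claimed — and it is compatible with restriction to $\A_g$, recovering the known surjection $\check{\R}_g\to\R_g$. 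I would also record here that $\phi^*$ carries the universal Hodge class $\lambda_1$ to $c_1$ of the canonical extension of $\det(\EE^g)$, which descends along $\pi$ to the \emph{ample} line bundle $L$ defining $\A_g^*$ as a $\mathrm{Proj}$.

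For the injectivity I would exploit that $\check{\R}_g\cong A^\bullet(\check D_g)$ is the Chow ring of the Lagrangian Grassmannian $\check D_g$, hence a graded Poincaré-duality $\QQ$-algebra concentrated in degrees $0,\dots,d$ with $d=\dim\A_g=g(g+1)/2$ and one-dimensional socle $A^d(\check D_g)=\QQ$. In such an algebra every nonzero ideal meets the socle: if $0\neq x$ is homogeneous of degree $i$ in the kernel of a unital graded ring homomorphism $\phi^*$, duality furnishes $y\in A^{d-i}$ with $0\neq xy\in A^d$, whence $\phi^*(A^d)=0$. Thus $\phi^*$ is injective as soon as it is nonzero on $A^d$. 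Since $\lambda_1$ is ample on $\check D_g$, the class $\lambda_1^{\,d}$ spans $A^d(\check D_g)$, so it suffices to show $\phi^*(\lambda_1^{\,d})\neq0$. Using $\phi^*(\lambda_1)=c_1(L)$ from above and capping against the fundamental class of the complete stack, this reduces to the strict positivity of the degree
\[
\int_{\A_g^*\otimes\FF_p} c_1(L)^{d}=\deg_L\bigl(\A_g^*\otimes\FF_p\bigr)>0,
\]
which holds precisely because $L$ is ample. The only points needing care are that the degree map on the operational Chow ring of the singular complete stack $\A_g^*\otimes\FF_p$ computes the usual top self-intersection of $L$ (standard, since cap product with the fundamental class is defined) and the rational comparison with the projective coarse space, which affects only the positive constant.
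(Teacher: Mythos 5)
Your argument splits into two halves that should be judged separately. For the \emph{extension} of the morphism to $\A_g^*\otimes\FF_p$ you follow exactly the paper's route: pass to a toroidal compactification $\tilde{\A}_g$, note that the logarithmic de Rham cohomology carries an $F$-zip extending the one over $\A_g$, and argue that the resulting $\tilde{\zeta}\colon\tilde{\A}_g\otimes\FF_p\to[E_{\mathcal{Z}}\backslash G]$ factors through $q\colon\tilde{\A}_g\to\A_g^*$ because the zip only records the abelian part of a degeneration. The descent step you explicitly defer (``granting it'') is precisely the substantive content of the paper: using the Faltings--Chai description of $\mathcal H^1$ by universal vector extensions, the toric differentials give an isotropic subspace $I$ contained in the kernel of Frobenius, and the analysis of $\sZ(H,I)\to\sZ(H')$ culminating in Lemma \ref{lemma:nicedegeneration} shows that the zip along the boundary is induced from the abelian part, whence the factorization. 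So here your proposal is the paper's argument minus its key verification; nothing you say is wrong, but the part you call ``the hard part'' is indeed where the paper does its work.

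For the \emph{injectivity} you take a genuinely different and more elementary route. The paper identifies the bivariant classes $\eta^*(\underline{\lambda}_i)$ with the push-forward classes $\lambda_i'=q_*(\lambda_i)$ via the projection formula, and deduces injectivity from the isomorphism $A^\bullet_\QQ([E_{\mathcal{Z}}\backslash G])\cong\check{\R}_g\subset A^\bullet_\QQ(\tilde{\A}_g\otimes\FF_p)$, which rests on the Ekedahl--van der Geer theorem that the closed Ekedahl--Oort strata are effective cycles with non-zero tautological classes. You instead use only that $\check{\R}_g\cong A^\bullet_\QQ(\check{D}_g)$ is a graded Poincar\'e-duality algebra, so that injectivity of the graded ring homomorphism reduces to non-vanishing on the one-dimensional socle, which you get from ampleness; this bypasses the Ekedahl--Oort cycle-class computations entirely and would transfer to the Hodge-type setting of the paper's first remark, where such computations are less available. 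One step you assert rather than prove is the identification $\phi^*(\lambda_1)=c_1(L)$ in the operational ring; this needs justification (for instance, injectivity of $q^*$ on operational classes for the proper surjective birational $q$, which does hold with $\QQ$-coefficients), but you can also avoid it: since $q^*\phi^*(\lambda_1)=\tilde{\zeta}^*(\underline{\lambda}_1)=\lambda_1=q^*c_1(L)$, the bivariant projection formula gives $\phi^*(\lambda_1)^{d}\cap[\A_g^*]=q_*\bigl(\lambda_1^{d}\cap[\tilde{\A}_g]\bigr)$ with $d=\dim\A_g$, whose degree is $\deg\bigl(c_1(L)^{d}\cap[\A_g^*]\bigr)>0$ by ampleness of $L$, which is all your socle argument requires. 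What the paper's heavier route buys in exchange is the finer conclusion implicit in its ``best possible way'' claim: the image of $\check{\R}_g$ in $A^\bullet_\QQ(\A_g^*\otimes\FF_p)$ is generated by the canonical classes $\lambda_i'$, independent of the toroidal model, and is spanned by effective Ekedahl--Oort cycles; your argument proves the embedding but gives no such representatives.
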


Here the ring $A^\bullet_\QQ( \A_g^*\otimes\FF_p)$ is Fulton's bivariant
Chow ring \cite{Fulton}. 
One may be tempted to call this image the tautological ring of $\A_g^*$, 
although (as was shown in  \cite{charneylee:satake}), the stable cohomology of the Baily-Borel compactification is larger than the
algebra generated by the $\lambda_i$ . 

\begin{remark}\label{rem:}
A natural analogue of this theorem can be stated for Shimura varieties $X$ of Hodge type, 
where the role of ${\R}_g$ is taken by the subalgebra $\R_G\subset A^\bullet_\QQ(X\otimes k)$, 
with $k$ a finite field, generated  by Chern classes of automorphic bundles. Here the subscript $G$ refers to the algebraic group that is part of the  data that give $X$ the structure of a 
Shimura variety. It is here implicit that we employ an integral model for $X$ which 
has good reduction over the prime with residue field $k$. Such models have been
constructed by Vasiu \cite{Vasiu} and Kisin \cite{Kisin}.  

The work of Pink, Wedhorn and Ziegler \cite{P-W-Z1,P-W-Z2} applies  to this setting:  we still have  
a  moduli stack of zips $[E_{\mathcal{Z}}\backslash G]$ and a classifying morphism 
$\zeta: X\to [E_{\mathcal{Z}}\backslash G]$, the fibres of which are the Ekedahl-Oort strata. 
The Chow $\QQ$-algebra of $[E_{\mathcal{Z}}\backslash G]$
(here denoted $\check{\R}_G$) is according to \cite[Thm.\ 2.4.4]{Brokemper} isomorphic to 
that of the compact dual  $\check{D}$.  If $\zeta$ is faithfully flat and surjective
and can be extended to a morphism $\tilde{\zeta}$ of a toroidal compactification of Faltings-Chai type, then
essentially the same proof shows that $\check{\R}_G$ embeds in the Chow algebra of the
toroidal compactification (see \cite{V-W,Zhang, Nie,K-M-S} for results in this direction).
The strata extend to the boundary and enjoy good intersection
properties with the boundary, see \cite[Thm.\ 6.1.6]{Boxer} and \cite{Lan-Stroh}.  
The morphism $\tilde{\zeta}$ factors through a morphism $\eta$ of the minimal
compactification to the stack $[E_{\mathcal{Z}}\backslash G]$ and we thus find in a way similar to
the case of ${\A}_g$ a copy of $\check{\R}_G$ in the
Chow algebra of the minimal  compactification $A^\bullet_\QQ(D_\Gamma^*)$.
We will confine ourselves however to the case $\A_g$.
\end{remark}

Let us note that Esnault and Harris \cite{Es_Ha2} recently proved a lifting property in the case of mixed characteristic, but on the level of $\ell$-adic cohomology. 
It would be interesting to see whether  their result can be lifted to  the  level of Chow algebras.

Recent work of Wedhorn-Ziegler \cite{We-Zi} and Goldring-Koskivirta \cite{G-K1}, \cite{G-K2} points towards a possible generalization to Shimura varieties of Hodge type.

\end{section}

\begin{section}{The Case $\mathcal{A}_g$}

\subsection{Review of the situation in characteristic zero.}
We let $\tilde{\A}_g$ be a toroidal compactification of $\A_g$ of
Faltings-Chai type 
and denote by $q: \tilde{\A}_g \to \A^{\ast}_g$ the natural projection.
The Hodge bundle ${\EE}^g$ on ${\A}_g$ extends to
$\tilde{\mathcal{A}}_g$ and this extension
is again denoted by ${\EE}^g$.

The analytic space of the complex fibre ${\A}^{\an}_g$ 
can be described in terms of
the Chevalley group $G=\Sp_{2g}$, 
the automorphism group of the standard
symplectic lattice ${\ZZ}^{2g}$ as
$G(\ZZ)\backslash D_g$  where
$D_g=G({\RR})/K$ is a bounded symmetric domain with $K$ a maximal 
compact subgroup.

Let us briefly review what is known about the Chow ring of the compact dual of $D_g$ in the  
more general case where $G$ is a reductive algebraic $\RR$-group whose symmetric space $D$
has the structure of a bounded symmetric domain. Then  the compact dual $\check{D}$ of $D$ 
is of the form $(G/P)(\CC)$ with $P$ a maximal parabolic subgroup of $G$.
We have a decomposition $G/P$ into Schubert cells: $G/P=\bigsqcup BwP/P$, where $w$ runs
over the elements  of the Weyl group $W$ of $G$, or rather (in order to keep the union disjoint), over a complete 
set $W^P$ of coset representatives for $W/W_P$, where  $W_P$ is the subgroup of $W$ associated to $P$.      
It is known that the Chow ring $A^{\bullet}(\check{D})$ has as an additive 
basis the classes
of the closures of Schubert cells (Schubert varieties) in $\check{D}$.
The ring structure on the Chow ring with $\QQ$-coefficients, $A^{\bullet}_{\QQ}(\check{D})$, is described by Borel (see \cite[p.\ 142, (28)]{Tamvakis}:
$$
A^{\bullet}_{\QQ}(\check{D})\cong  \mathcal{S}^{W_P}/\langle \mathcal{S}_{+}^W \rangle\,.
$$
Here $\mathcal{S}$ is the symmetric ${\QQ}$-algebra on the character group
of a Borel subgroup, $\mathcal{S}^{W_P}$ is the invariant part under
$W_P$ and $\langle \mathcal{S}_{+}^W \rangle$ is the ideal generated by 
$W$-invariant elements of positive
degree. 
In case the group is `special', e.g.\ for ${\rm GL}_n$ and $\Sp_{2n}$,
this isomorphism also holds for ${\ZZ}$-coefficients.  

In our case, where $G=\Sp_{2g}$, this graded $\QQ$-algebra  is isomorphic to 
$$
\check{\R}_g={\QQ}[u_1,\ldots,\ldots,u_g]/I,
$$
where $u_i$ has degree $i$ and $I$ is the ideal generated by the graded pieces of 
$$
(1+u_1+\cdots + u_g)(1-u_1+u_2- \cdots + (-1)^gu_g) -1. 
$$
So this gives a relation in every positive even degree $\le 2g$. Note that $\dim_\QQ\check{\R}_g=2^g$.

\smallskip
For a field $k$, the Chern classes $\lambda_i:=c_i({\EE}^g)$ in 
$A^i_{\QQ}(\tilde{\A}_g\otimes k)$ satisfy the same relation
as the $u_i$ in the Chow ring of $\tilde{\A}_g$ as the $u_i$:
$$
(1+\lambda_1+\cdots + \lambda_g)(1-\lambda_1+\cdots +(-1)^g\lambda_g)=1
$$
(see \cite{vdG, E-V}) and they generate a subring of the  Chow ring
$A_{\QQ}^{\bullet}(\tilde{\A}_g\otimes k)$ isomorphic to the rational Chow
ring of $\check{D}_g$. 
This extends the Hirzebruch-Mumford Proportionality
to the Chow rings.
This ring is called the tautological subring of 
$A^{\bullet}_{\QQ}(\tilde{\mathcal{A}}_g \otimes k))$ and denoted again by 
$\check{\R}_g$.
Its image in $A^{\bullet}_{\QQ}({\A}_g)$ under restriction via 
$j: {\A}_g\otimes k
 \hookrightarrow \tilde{\A}_g\otimes k$ is  ${\R}_g=\check{\R}_g/(\lambda_g)
\cong \check{\R}_{g-1}$.

\subsection{The Artin stack of zips.} 
We now restrict to characteristic $p$ and consider
${\mathcal A}_g\otimes {\FF}_p$ and 
$\tilde{{\mathcal A}}_g\otimes {\FF}_p$. 
The compact dual of Siegel space (or of any symmetric domain) has no 
obvious counterpart  in  positive characteristic.
But it turns out that there is a good substitute, \emph{viz.}\  the Artin stack of zips,  that  can take on that role
for our purposes. Its  origin  is the  
so-called Ekedahl-Oort stratification, introduced in \cite{Oort1}. 
As we will recall below, it has $2^g$ strata, and as was shown in \cite{vdG,E-vdG1}, each of these has the virtue that
the cycle class of its closure lies in the tautological subring.
For example,  we have the (closed) $p$-rank loci $V_f$ ($p$-rank $\leq f$
with $0 \leq f \leq g$)
with cycle classes $[V_f]=(p-1)(p^2-1)\cdots (p^{g-f}-1)\lambda_{g-f}$.
Thus the generators of $\check{\R}_g$ 
are represented by these effective cycles.

\subsubsection*{The basic definition}For a principally polarized abelian variety $X$ of dimension $g$ 
over a perfect field of characteristic $p>0$ the de Rham cohomology space $H^1_{\rm dR}(X)$
comes equipped with a non-degenerate alternating form.
The Frobenius operator  induces a $p$-linear endomorphism  of $H=H^1_{\rm dR}(X)$ whose kernel is
its Hodge subspace $H^0(X,\Omega_X^1)$. Both the kernel  and the 
image of this endomorphism are Lagrangian subspaces $F,F'$ of dimension $g$.
As we will see below, this structure (consisting of a symplectic vector space $H$ and a Frobenius-linear 
endomorphism  $\varphi$ of $H$
whose kernel and image is a Lagrangian subspace) has only 
finitely many isomorphism types. Such a  structure
is called a \emph{zip}  and was studied in \cite{M-W}. 
(To make the isomorphism type explicit one usually endows  
kernel and image with  filtrations by taking  preimages
and images of iterates of $\varphi$ and then  extends  these 
to self-dual filtrations on $H$ by adding their symplectic perps. 
This results in  a descending filtration 
(a refinement of the \emph{Hodge filtration}) $C^{\bullet}$,  
and an ascending filtration (a refinement of 
the \emph{conjugate filtration}) 
$D_{\bullet}$,  connected by the Cartier operator giving Frobenius-linear 
identifications $(C^{i}/C^{i+1})^{(p)} \cong D_i/D_{i-1}$. (The dimensions
of the intersections of these filtrations determine the isomorphism type. 
This will however not matter to us in what follows.) 

\subsubsection*{Moduli space and Schubert varieties}
In an evident manner we have defined a moduli space $\sZ(H)$ of all zip structures on $H$;
it is the moduli space of triples $(L_1,L_2,\varphi)$ with $L_1,L_2$ Lagrangian subspaces of $H$
and $\varphi: (H/L_1)^{(p)} {\buildrel \sim \over \to } L_2$ an isomorphism.
If $F(H)$ is the Grassmannian of Lagrangian subspaces of $H$ and $\sF_{F(H)}\to F(H)$ denotes its universal bundle, then
$\sZ(H)$ is an open subset in the  total space of the exterior tensor product bundle 
$$
\Fr_{p}^{*}(H\otimes {\mathcal O}_{F(H)}/\sF_{F(H)})\boxtimes \sF_{F(H)}
=\Fr_{p}^*(\sF_{F(H)})^\vee\boxtimes \sF_{F(H)}
$$ 
over $F(H)\times F(H)$, where $\Fr_p$ is the absolute Frobenius on $F(H)$. 
The group $G=\Sp (H)$ acts in an evident manner on $\sZ(H)$. We shall call the closure of a $G$-orbit in $\sZ(H)$ a \emph{Schubert variety}. 

There are  $2^g$ such Schubert varieties. This is based on the observation that  the relative position of a pair $(F,F')$ of Lagrangian 
subspaces (in other words, the $G$-orbit of such a pair) is given by a double coset of $G$: if $P$ (resp.\ $P'$) 
is the $G$-stabilizer of $F$ (resp.\ $F'$), then
the $g\in G$ for which $F=gF'$ make up the double coset $PgP'$, 
so that we get an element of $P\backslash G/P'$. We can identify this set of double cosets in terms of Weyl groups: if we choose
a Borel subgroup $B$ contained in $P$ with maximal torus $T$ and $N_G(T)$ (resp.\  $N_{P}(T) $) is the normalizer of $T$ in $G$ (resp.\ in $P$), then
$W=N_G(T)/T$ (resp.\  is $W_{P}:= N_{P}(T)/T$) 
is the Weyl group of the pair $(G,T)$ (resp.\  $(P,T)$) 
and it  is a standard fact of the theory of algebraic groups that the natural map
\[
W_{P}\backslash W\cong N_T(P)\backslash N_T(G) \to P\backslash G/P'
\] 
is a bijection. One finds that in our case $W_{P}\backslash W$ has  $2^g$ elements, and hence there are as many Schubert varieties.

\subsubsection*{The Artin stack of zip data} Let us make here the connection with the way this notion appears in the literature. The groups $P$  and $P'$ are maximal parabolic subgroups of $G$ 
whose Levi quotients $L_P$ resp.\ $L_{P'}$ can be identified 
with the general linear groups of $F$ (or of its dual $H/F$ for that matter) resp.\ $F'$. So an isomorphism $L_P\cong L'_{P'}$ can be understood as  giving an isomorphism $H/F\cong F'$ up to a scalar. Similarly, a Frobenius-linear map of $H/F$ onto $F'$ determines a  Frobenius  isogeny
$L_P\to L_{P'}$.  
We can formulate this in terms of $G$ only: in our setting a \emph{zip datum}  is given by a $4$-tuple $\mathcal{Z}=(G,P,P',\varphi)$, where 
$G={\rm Sp}_{2g}/{\FF}_p$, $P$ and $P'$ are maximal parabolic subgroups
of $G$ and $\varphi: L_P \rightarrow L_{P'}$ is an isogeny between their Levi quotients given by Frobenius. 
We form the  fibre product of $P$ and $P'$ over $L_{P'}$ (the former via the group homomorphism  $P\to L_P\xrightarrow{\varphi} L_{P'}$) in the category 
of algebraic groups: 
$$
E_{\mathcal{Z}}:=P\times_{L_{P'}} P' .
$$
This group acts on $G$ by $(p,q)\in E_{\mathcal{Z}}: g\mapsto pgq^{-1}$ and we can form the Artin stack $[E_{\mathcal{Z}}\backslash G]$.
Brokemper determined the Chow ring of 
 the stack $[E_{\mathcal{Z}}\backslash G]$ (which is essentially by definition the $G$-equivariant Chow ring of $\sZ(H)$).
He considers  in \cite{Brokemper} more generally the case of a 
connected group $G$ and an algebraic zip datum. 
Choose $g\in G$ such that $T':=gTg^{-1}\subset P'$.
If we identify $T$ resp.\ $T'$ with their images in $L_P$ 
resp.\ $L_{P'}$, then we can even 
arrange that $\varphi$ takes $T$ to $T'$, 
so that  we have defined an isogeny
$$
\widetilde{\varphi}: T \to T, \quad t\mapsto g^{-1}\varphi(t)g
$$
Then $\widetilde{\varphi}$ acts on $\mathcal{S}$, the symmetric algebra of the character group of $T$. The  Chow ring of the stack is (\cite[Thm.\ 2.4.4, page 27]{Brokemper})
$$
A^{\bullet}([E_{\mathcal{Z}}\backslash G]) = 
\mathcal{S}^{W_P}/\left( f-\widetilde{\varphi}(f): f \in \mathcal{S}_{+}^{W}
\right)
$$
In our case, this group is additively generated by the Schubert varieties as defined above. 

This Chow ring can be regarded as the ring of characteristic classes for symplectic vector bundles over $\FF_p$ endowed with a zip structure
for the following reason. If we have a symplectic vector bundle  $\sH$ over a base scheme $S$ (or stack, for that matter) over $\FF_p$ of  rank $2g$,  then the above construction yields the zip bundle $\sZ(\sH)$ over $S$, so that  to endow 
$\sH$ with a zip structure amounts to giving a section of $\sZ(\sH)/S$. This comes  with relative Schubert varieties and these define an 
embedding of $A^{\bullet}([E_{\mathcal{Z}}\backslash G])$ in Fulton's bivariant Chow ring $A^{\bullet} (\sZ(\sH))$ as a subalgebra, having these  relative Schubert varieties as additive generators. 
If a zip structure on $\sH$ has associated section $\sigma$, then  we may define its  ring of characteristic  classes as the image of  this subalgebra under  
$\sigma^*: A^{\bullet} (\sZ(\sH))\to A^\bullet(S)$. Note that when $\sigma$ has proper intersection with a given relative Schubert variety $Z$ in $\sZ(\sH)$, then the associated class $\sigma^*[Z]$ is represented by a \emph{specific algebraic cycle $\ge 0$} on $S$ defined over $\FF_p$; we shall refer to these as the \emph{Ekedahl-Oort cycles}. 

\subsection{Degenerations of zips}\label{subsection:degenerationofzips}
Let us for a moment return to our fixed symplectic vector space $H$ over $\FF_p$ and
suppose we are given an isotropic subspace $I\subset H$ over $\FF_p$. Then $H':=I^\perp/I$ is a symplectic  vector space over $\FF_p$ and 
we if assign to a Lagrangian subspace $F\subset H$  which  contains $I$ the subspace $F/I\subset H'$,  we get a bijection between
the  Lagrangian subspaces of $H$ containing $I$ and the Lagrangian subspaces of $H'$. 
Denote by $\sZ(H, I)\subset \sZ(H)$  the subscheme defined by the Frobenius-linear endomorphisms $\varphi$ of $H$
that  are zero on $I$,  preserve $I^\perp$, and  induce the Frobenius  on $H/I^\perp$.   
The kernel of $\varphi$ is sandwiched between $I$ and $I^\perp$ and the induced endomorphism $\varphi'$ of $H'$ defines an element of $\sZ(H')$, as
both its kernel  and  image are Lagrangian subspaces. The resulting  
morphism $\sZ(H, I)\to\sZ(H')$ is equivariant over the evident group homomorphism from the $\Sp(H)$-stabilizer of $I$ to
$\Sp (H')$ and this makes $\sZ(H, I)$ a torsor over a vector bundle on $\sZ(H')$.  
The preimage of a Schubert subvariety of $\sZ(H')$ is contained in a Schubert subvariety 
of $\sZ(H)$ of the same codimension. 
To be precise, every $\Sp(H)$-orbit in $\sZ(H)$ orbit meets $\sZ(H, I)$ transversally, 
and when this intersection is nonempty, then it is the preimage of a $\Sp (H')$-orbit in $\sZ(H')$.
Recall that the Schubert cells correspond bijectively to the elements of $W_P\backslash W$
with $P$ the stabilizer of a Lagrangian $F\subset H$ and similarly 
the Schubert cells of $\sZ(H')$ correspond
to  $W_{P'}\backslash W'$ with $P'$ the stabilizer of $F/I$ and $W'$ the Weyl group of ${\rm Sp}(H')$.
The map $\sZ(H, I)\to\sZ(H')$ is a stratified map corresponding to an 
embedding $\iota_I: W_{P'}\backslash W' \hookrightarrow W_P \backslash W$.

We use these observations to understand a class of degenerations of zips over a discrete valuation ring.
Let $R$ be a discrete valuation ring of finite type over $\FF_p$ with residue field $\kappa$ and field of fractions $K$.

Let $\cH$ denote a symplectic space of rank $2g$ over $R$
and $\cI \subset \cH$ an isotropic subspace over $R$ (so that 
$\cH':=\cI^\perp/\cI$ is a symplectic space over $R$).
If $H:= \kappa \otimes_R \cH$ with isotropic subspace $I$
(resp.\ $H':= \kappa\otimes_R \cH'$) 
denotes the fiber over the closed point, 
then  we have evident specialization maps $\sZ(\cH)\to \sZ(H)$
(resp.\   $\sZ(\cH')\to \sZ(H')$). 

Suppose given $\varphi \in \sZ(\cH,\cI)$ and assume that $\varphi_K$ belongs to
the Schubert cell with index $w$. We let $\varphi'$
be the image in $\sZ(\cH')$  with specialization $\varphi'_o \in \sZ(H')$.
The discussion above implies the following result.

\begin{lemma}\label{lemma:nicedegeneration}
If the element $\varphi_K$ belongs to the Schubert cell with index 
$w$ and $\varphi'_o$  to the Schubert cell $w'$,
then the specialization 
$\varphi_o$ belongs to the Schubert cell with index $\iota_I(w')=w$. 
\end{lemma}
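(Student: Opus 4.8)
The plan is to regard $\varphi$ as an $R$-point of $\sZ(\cH,\cI)$ and to transport the Schubert-cell labels through the morphism $\sZ(\cH,\cI)\to\sZ(\cH')$ separately over the generic and over the closed point of $\spec R$, using the fiberwise stratified-map correspondence $\iota_I$ recalled above. Since $\sZ(\cH,\cI)\to\sZ(\cH')$ is defined over $R$, it is compatible with both specialization maps, and over each residue field a point lying above a Schubert cell of index $v$ sits in the Schubert cell of index $\iota_I(v)$. So the whole argument amounts to applying this one correspondence twice and comparing the two outcomes.

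First I would dispose of the statement about $\varphi_o$. Its image under specialization is $\varphi'_o$, which by hypothesis lies in the cell of index $w'$ in $\sZ(H')$. Applying the stratified-map correspondence over the residue field $\kappa$ to the point $\varphi_o\in\sZ(H,I)$ and its image $\varphi'_o$ places $\varphi_o$ in the cell of index $\iota_I(w')$ at once. This gives the first half of the assertion and is essentially formal.

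The substantive content is the equality $\iota_I(w')=w$. Applying the same correspondence over $K$ to $\varphi_K$ and its image $\varphi'_K$, and writing $u$ for the index of the cell containing $\varphi'_K$, the hypothesis that $\varphi_K$ lies in the cell of index $w$ yields $w=\iota_I(u)$. As $\iota_I$ is injective, proving $\iota_I(w')=w$ is therefore equivalent to proving $w'=u$; that is, to showing that the induced $R$-point $\varphi'$ of $\sZ(\cH')$ carries the \emph{same} Ekedahl-Oort type at the generic and at the closed point, so that no stratum jump occurs downstairs. To establish this I would pass to the kernel and image of $\varphi'$, which are Lagrangian subbundles $L_1,L_2\subset\cH'$ over $R$; the cell index of $\varphi'$ in either fiber is the relative position of the pair $(L_1,L_2)$, and the absence of a jump is exactly the constancy over $\spec R$ of the intersection invariants of this pair. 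I would control these invariants by the explicit shape of elements of $\sZ(\cH,\cI)$ — Frobenius-linear maps vanishing on $\cI$, preserving $\cI^\perp$, and inducing Frobenius on $\cH/\cI^\perp$ — together with the transversality of the $\Sp$-orbits to $\sZ(\cdot,\cI)$, which should force $\varphi$ to meet the relative Schubert stratum of index $w$ transversally rather than slip into its boundary at the closed point.

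The main obstacle is precisely this last step. A priori the specialization only gives $w'\le u$ in the closure order (the special type is a degeneration of the generic one), and since $\iota_I$ preserves that order the inclusion $\varphi_o\in\overline{\text{cell }w}$, i.e.\ $\iota_I(w')\le w$, is automatic; what is not automatic is the reverse inequality $u\le w'$ needed for equality. Ruling out a genuine Ekedahl-Oort jump at the closed point is where the ``minimal perversity'' transversality of the Ekedahl-Oort strata to the boundary must be used in an essential way: it is this transversality that prevents $\varphi'$ from falling out of the open cell of index $u$ into a deeper one, and hence forces $w'=u$ and $\iota_I(w')=w$. I expect this to be the only point in the proof that goes beyond bookkeeping.
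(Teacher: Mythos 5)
Your first, ``formal'' paragraph is in fact the entirety of the paper's proof: the paper justifies the lemma only by the phrase that the preceding discussion implies it, i.e.\ one applies the stratified-map correspondence for $\sZ(\cH,\cI)\to\sZ(\cH')$ at the closed point (placing $\varphi_o$ in the cell of index $\iota_I(w')$) and at the generic point (giving $w=\iota_I(u)$, where $u$ denotes the index of the cell containing $\varphi'_K$). No further argument is offered there, and none is used later.

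The gap is exactly the step you yourself flag as the main obstacle, and the route you propose cannot close it. First, appealing to the ``minimal perversity'' transversality of the Ekedahl--Oort strata with respect to the boundary is circular: in the paper that transversality is \emph{deduced from} this lemma in the paragraph following it, so it is not an available input; and the transversality of the $\Sp(H)$-orbits to $\sZ(H,I)$ is a statement inside a single geometric fibre, which constrains nothing about specialization along $\spec R$. Second, and decisively, the reverse inequality $u\preceq w'$ is not a consequence of the hypotheses at all. Since $\sZ(\cH,\cI)\to\sZ(\cH')$ is a torsor under a vector bundle and $\spec R$ is affine, \emph{every} $R$-point of $\sZ(\cH')$ lifts to some $\varphi\in\sZ(\cH,\cI)(R)$; taking the point downstairs to be the zip of a generically ordinary abelian scheme over $R$ with non-ordinary special fibre (these exist, e.g.\ in the Legendre family) yields data satisfying all hypotheses of the lemma but with $\iota_I(w')\preceq w$ and $\iota_I(w')\neq w$. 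Geometrically this is a Faltings--Chai degeneration whose abelian part jumps strata. So an unconditional ``no jump'' statement is false, and the equality sign in the lemma can only be read the way the paper uses it: the indices are matched through the generic fibre, $w=\iota_I(\text{index of }\varphi'_K)$, the content being that the stratum of the specialization is exactly $\iota_I(\text{index of }\varphi'_o)$, which lies in the closure of the stratum $w$ (with equality precisely when the abelian part does not jump). In short, your bookkeeping paragraph is the whole intended proof; the extra theorem you planned to prove on top of it is not there to be proven, and your diagnosis that something beyond bookkeeping is needed points to an imprecision in the lemma's wording rather than to a missing transversality argument.
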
 

\subsection{Extension of the stratification across the Satake compactification}
By assigning to a  principally polarized abelian variety  
of dimension $g$ the isomorphism type of its zip
on its first de Rham cohomology space, we obtain a stratification
of the moduli space $\mathcal{A}_g \otimes {\FF}_p$, the \emph{Ekedahl-Oort stratification}. It is
is induced by a morphism of stacks
$$
\zeta: \mathcal{A}_g \to [E_{\mathcal{Z}}\backslash G]\, .
$$
This morphism is smooth (see \cite[Thm.\ 4.1.2]{Zhang}) 
and the fibres are the strata. 

This stratification can be extended to a 
toroidal compactification (of Chai-Faltings type)
$\tilde{{\mathcal A}}_g\otimes {\FF}_p$. 
The space $\tilde{\mathcal{A}}_g$ admits a stratification by torus rank:
if $q: \tilde{\mathcal{A}}_g \to \mathcal{A}^{*}_g$ is the canonical map
to the Baily-Borel compactification  and $\mathcal{A}^{*}_g=\sqcup_{i=0}^g \mathcal{A}_{g-i}$
is the standard decomposition, then the restriction of the Hodge bundle to
$\mathcal{A}_{g}^{\langle g-i \rangle}:=q^{-1}(\mathcal{A}_{g-i})$,
contains
a rank $g-i$ subbundle ${\EE}^{(g-i)}$ which is the pullback of the Hodge bundle
on $\mathcal{A}_{g-i}$.

The canonical extension of the de Rham complex is the logarithmic de Rham
complex where logarithmic singularities are allowed along the divisor added to
compactify the semi-abelian variety, 
cf.\ \cite[VI, Theorem 1.1, p.\ 195]{F-C}.
The logarithmic de Rham sheaf
$$
{\mathcal H}^1:=R^1\pi_{*}(\Omega^{\bullet}_{\tilde{\mathcal{X}}_g/\tilde{\mathcal{A}}_g}(\log))
$$
extends the de Rham sheaf 
$\mathcal{H}_{\rm dR}^1(\mathcal{X}/\mathcal{A}_g)$.
On $\tilde{\mathcal{A}}_g\otimes {\FF}_p$ it comes again with
two filtrations forming a zip.
In fact, the morphism $\zeta$ can be 
extended to a morphism 
$\tilde{\zeta}: \tilde{\mathcal{A}}_g \otimes {\FF}_p 
\to [E_{\mathcal{Z}}\backslash G]$
which is again smooth as can be seen by using \cite[Lemma 5.1]{E-vdG1} or \cite{Boxer}, see
also below. 
The closed strata on $\tilde{A}_g \otimes {\FF}_p$ 
are the closures of
the strata on $\mathcal{A}_g \otimes {\FF}_p$.

\bigskip
The Ekedahl-Oort stratification on $\tilde{\mathcal{A}}_g \otimes {\FF}_p$ 
intersects the boundary strata transversally
as we will now explain. The reason is that the Ekedahl-Oort stratification
is defined by the action of Frobenius and Verschiebung acting on the 
logarithmic de Rham cohomology $H^1_{\rm dR}$ of a semi-abelian variety and on the toric
part this action is essentially trivial.

We consider a semi-abelian variety ${\mathcal G}^0$ over $S={\rm Spec}(R)$
with $R$ a discrete valuation ring of finite type over ${\FF}_p$.
We assume that the generic fibre is abelian and the special fibre is the
N\'eron model of a semi-abelian variety of torus rank $r$.
We let ${\mathcal G}/S$ be a toroidal compactification of ${\mathcal G}^0$ 
of Faltings-Chai type.
It can be obtained via the action on a semi-abelian variety
$\tilde{\mathcal G}$ over $S$ by a group
of periods $\iota: Y \to \tilde{\mathcal G}(S)$ with $Y$
free abelian of rank $r$.
Here the semi-abelian variety $\tilde{\mathcal G}$ is an extension 
$0 \to T \to \tilde{\mathcal G} \to A \to 0$
of an abelian scheme $A/S$ by a split torus $T/S$ of rank $r$.
In this case the logarithmic de Rham cohomology can be described
with the help of universal vector extensions, that is, extensions of
group schemes by vector group schemes. We refer to \cite{F-C} pages 81--86
for a description. The universal vector extension $E_{\tilde{\mathcal G}}$
of $\tilde{\mathcal G}$ is a vector group extension 
$$
0 \to L_{\tilde{\mathcal G}} \to E_{\tilde{\mathcal G}} \to \tilde{\mathcal G} \to 0
$$
that is canonically isomorphic to the pullback under $\tilde{\mathcal G} \to A$
of the universal vector extension $0 \to L_A \to E_A \to A\to 0$ of $A$,
where $L_A={\rm Lie}(A^{\vee}/S)^{\vee}$ is the sheaf of invariant relative
$1$-forms on the dual abelian variety $A^{\vee}$ of $A$.
For the quotient construction we need an equivariant form of this,
that is, we need in addition a lifting of the homomorphism $Y \to \tilde{\mathcal G}(S)$
to $Y \to E_{\tilde{\mathcal G}}(S)$. Then $Y$ acts via translation.

The dual of the logarithmic de Rham cohomology ${\mathcal H}^1$ is the $Y$-equivariant
Lie-algebra of the universal vector extension of $\tilde{\mathcal G}$.
By the toroidal construction as in \cite[Ch.\ VI]{F-C} this Lie-algebra has a weight filtration with
subquotients the Lie algebra $L_T$ of $T$, the homology of the abelian variety $A$
and $Y$. The ranks are $r$, $2g-2r$ and $r$. The subspace of rank $2g-r$ of  ${\mathcal H}^1$
will be denoted by $I^{\bot}$ and its orthogonal complement by $I$. 
We can identify $I$ with the invariant differentials
of the torus $T$. Then $I$ is isotropic and contained in the kernel of 
Frobenius. We are thus in the situation described above in subsection 
\ref{subsection:degenerationofzips}. 
Since $I$ is contained in the kernel of Frobenius
the isomorphism type of the zip on the special fibre of
${\mathcal H}^1$ depends only on the zip of the de Rham cohomology
of the abelian part. 
We can apply Lemma \ref{lemma:nicedegeneration}
to conclude that the closures of the strata on ${\A}_g \otimes {\FF}_p$ are the strata
on $\tilde{\A}_g \otimes {\FF}_p$ and by induction that the intersection
with the boundary strata is proper. Indeed, with the notation used there,
if $\varphi_K \in \sZ(\cH_K,\cI_K)$  and $\varphi'_K$ belongs to the Schubert
cycle with index $w' \in W_{P'}\backslash W'$ then $\varphi_K$ extends uniquely
to $\varphi$ with Schubert index $\iota_I(w')$.

\begin{remark}
The valuation of the torus part of the periods defines a ${\ZZ}$-valued bilinear 
form on $Y$ which we can see as the analogue of the monodromy operator of Hodge theory. 
Its invariant part defines a subspace $I^{\bot}$ of dimension $2g-r$  in the
special fibre of the logarithmic de Rham cohomology over $S$.
(One might view it as associated to the Dieudonn\'e module of the kernel of multiplication
by $p$ on the semi-abelian special fibre of ${\mathcal G}$.)
\end{remark}

\bigskip

We thus see that the map 
$\tilde{\zeta}: \tilde{\mathcal{A}}_g \otimes {\FF}_p\to 
[E_{\mathcal{Z}}\backslash G]$ factors through a map 
$$
\begin{xy}
\xymatrix{
\tilde{\mathcal{A}}_g \otimes {\FF}_p \ar[rr]^{\tilde{\zeta}}
\ar[rd]^{q}  && [E_{\mathcal{Z}}\backslash G] \\
& \mathcal{A}_g^{\ast}\otimes {\FF}_p \ar[ru]^{\eta} & \\
}
\end{xy}
$$

The morphism $\tilde{\zeta}: \tilde{\mathcal{A}}_g \otimes {\FF}_p
\to [E_{\mathcal{Z}}\backslash G]$ induces a homomorphism of Chow rings
$$
A^{\bullet}_{\QQ}([E_{\mathcal{Z}}\backslash G]) 
\to A^{\bullet}_{\QQ}(\tilde{\mathcal{A}}_g \otimes {\FF}_p)
$$
and it induces an isomorphism
$A^{\bullet}_{\QQ}([E_{\mathcal{Z}}\backslash G]) \cong \check{\R}_g$.
Indeed, the closed Ekedahl-Oort strata on
$\tilde{A}_g \otimes {\FF}_p$
are effective cycles with non-zero classes.

\begin{proof}[Proof of Theorem \ref{thm:main}]
The image under push forward via 
$q: \tilde{\A}_g\to {\A}_g^{\ast}$ 
of 
$\lambda_i\in  A^i_{\QQ}(\tilde{{\A}}_g)$ 
in the Chow cohomology group $A^{i}_{\QQ}({\A}_g^{\ast})$ 
is independent of the chosen toroidal compactification, see \cite[Def-Prop.\ 3.1]{E-vdG2}.
Thus these define classes  $\lambda_i^{\prime}$ in $A^i_{\QQ}({\A}_g^*)$.
On the other hand we have the generators $\underline{\lambda}_i$ of the
Chow ring of the stack 
$[E_{\mathcal{Z}}\backslash G]$ and via the map 
$\eta: {\A}_g^{\ast} \otimes {\FF}_p
\to [E_{\mathcal{Z}}\backslash G]$ these act
as bivariant classes  by cap product 
$\cap \underline{\lambda}_i: 
A_{k}({\A}_g^{\ast}\otimes {\FF}_p) \to A_{k-i}({\A}_g^{\ast}\otimes {\FF}_p)$
on the Chow groups of ${\A}^{\ast}_g \otimes {\FF}_p$. These satisfy 
$\tilde{\zeta}^*(\underline{\lambda}_i)=\lambda_i$.
By \cite[17.1]{Fulton} and the projection formula
(\cite[p.\ 323]{Fulton}) we have
$$
\cap \underline{\lambda}_i \, (q_*(c))= q_*(\lambda_i \cdot c)
=\lambda_i^{\prime}\, q_*(c)
$$
for all $c \in A_k(\tilde{\A}_g)$. 
This enables us identify the 
bivariant classes $\underline{\lambda}_i$ with the $\lambda_i^{\prime}$. 
It thus gives rise to a diagram
$$
\begin{xy}
\xymatrix{
A^{\bullet}_{\QQ} ([E_{\mathcal{Z}}\backslash G]) \ar@{-->}[rrd] \ar[rr]^{\tilde{\zeta}^*} &&
\check{\R}_g \ar@{^{(}->}[r] \ar[d]_{\cong} & A^{\bullet}_{\QQ}(\tilde{\mathcal{A}}_g\otimes {\FF}_p)\ar[d]^{q_*} \\
&& \check{\R}_g' \ar@{^{(}->}[r] & A^{\bullet}_{\QQ}(\mathcal{A}_g^{\ast} \otimes {\FF}_p)
 \\}
\end{xy}
$$ 
\end{proof}

\begin{remark}\label{rem:}
In the end the argument is based on the observation that {\sl all the tautological classes
$\lambda_i$ have an effective representative on $\tilde{\mathcal{A}}_g
\otimes {\FF}_p$ that intersects  the boundary properly.}
This fails to be so in characteristic zero, although  it is then true for the ample $\lambda_1$,
and hence for any power on $\lambda_1$, like $\lambda_2=(1/2)\lambda_1^2$. But this is not
so for $\lambda_3$. This seems related to the question of whether for a given field $k$ 
the space $\mathcal{A}_g\otimes k$ contains complete
subvarieties of codimension $g$. 
For $g=3$ every complete subvariety of $\mathcal{A}_3\otimes k$ has as class a
multiple of $\lambda_3$.
Conversely, an effective representative for $\lambda_g$ transversal to the
boundary of $\tilde{\A}_g\otimes k$ does not intersect the boundary because $\lambda_g^2=0$,
hence yields a complete subvariety of codimension $g$ .
\end{remark}
\section*{Acknowledgements}
We thank Luc Illusie and Gerd Faltings for correspondence and the referee for useful suggestions.

\end{section}


\begin{thebibliography}{9999}

\bibitem{Boxer} G.~Boxer:
\textsl{Torsion in the coherent cohomology of Shimura varieties and
Galois representations,}
Ph.D.\ thesis, Harvard University, Cambridge, Massachusetts, USA, 2015.

\bibitem{Brokemper} D.~Brokemper: \textsl{On the Chow ring of the stack of truncated
Barsotti-Tate groups,} Pacific J.\ Math.\ \textbf{296} (2018), 271--303. \url{arXiv:1611.08900v1}.

\bibitem{charneylee:satake}
R.~Charney, R.~Lee:
\textsl{Cohomology of the Satake compactification,} 
Topology \textbf{22} (1983), 389--423. 


\bibitem{E-vdG1} T.~Ekedahl, G.~van der Geer: 
\textsl{Cycle classes of the
E-O stratification on the moduli of abelian varieties,}
in: Algebra, Arithmetic, and Geometry, Y.\ Tschinkel and Y.\ Zarhin (eds.),
Progress in Mathematics 269,
Birkh\"auser, Basel (2010).

\bibitem{E-vdG2}  T.~Ekedahl, G.~van der Geer: 
\textsl{Cycles representing the
top Chern class of the Hodge bundle on the moduli space of abelian varieties.} 
Duke Math.\ Journal \textbf{129} (2004), 187--199.

 
 \bibitem{Es_Ha2}
 H.~Esnault, M.~Harris: 
 \textsl{Chern classes of automorphic vector bundles, II},
\'Epijournal Geom. Alg\'ebrique \textbf{3} (2019), Art.\ 14, 28 pp. 
 \url{https://arxiv.org/pdf/1801.08219.pdf}

\bibitem{E-V} H.~Esnault, E.~Viehweg:
\textsl{Chern classes of Gauss-Manin bundles of weight 1 vanish.} 
K-Theory \textbf{26} (2002), 287--305.

\bibitem{F-C} G.~Faltings, C.L.~Chai:
\textsl{Degeneration of abelian varieties.}
Ergebnisse der Math.\  \textbf{22}, Springer-Verlag, 1990.

\bibitem{Fulton}  W.~Fulton: 
\textsl{Intersection Theory.} Ergebnisse der Mathematik und ihrer Grenzgebiete.
 \textbf{2}
1984. Springer Verlag.

\bibitem{vdG} G.~van der Geer:
\textsl{Cycles on the moduli space of abelian varieties, } in:
Moduli of curves and abelian varieties,
Aspects Math., E33, Vieweg, Braunschweig (1999)

\bibitem{G-K1}
W.~Goldring, J.-S.~Koskivirta:
\textsl{Strata Hasse invariants, Hecke algebras and Galois representations.}
Invent.\  Math.\ \textbf{217} (2019), 887--984. \url{https://arxiv.org/pdf/1507.05032.pdf}

\bibitem{G-K2} W.~Goldring, J-S.~Koskivirta:
Stratification of flag spaces and functoriality.
Int.\ Math.\ Res.\ Not.\ \textbf{2019} (2019), 3646--3682.
\url{https://arxiv.org/pdf/1608.01504v2.pdf}.


\bibitem{gp}
M.~Goresky, W.~Pardon:
\textsl{Chern classes of automorphic vector bundles,}
Invent.\ Math.\ \textbf{147} (2002), 561--612.

\bibitem{Kisin}
M.~Kisin: 
\textsl{Integral models for Shimura varieties of abelian type.}
J.\ Amer.\ Math.\ Soc. \textbf{23} (2010), 967--1012.

\bibitem{K-M-S} M.~Kisin: Honda-Tate theory for Shimura varieties.
Oberwolfach Report No. 39/2015, ``Reductions of Shimura varieties".


\bibitem{Lan-Stroh} K.-W.~Lan, B.~Stroh:
\textsl{Compactifications of subschemes of integral models of Shimura varieties.}
Forum of Mathematics, Sigma (2018), \textbf{6},  105 pages

\bibitem{L:gp}
E.~Looijenga: 
\textsl{Goresky-Pardon lifts of Chern classes and associated Tate extensions,} 
Compositio Math.\ \textbf{153} (2017), 1349--1371.

\bibitem{M-W} B.~Moonen, T.~Wedhorn: 
\textsl{Discrete invariants of varieties in positive characteristic.} 
Int.\ Math.\ Res. Not., \textbf{72}: 3855--3903, 2004.

\bibitem{Nie}
S.~Nie: 
\textsl{Fundamental elements of an affine Weyl group.}
Math.\ Ann.\ \textbf{362} (2015), 485--499.

\bibitem{Oort1} F.~Oort:
\textsl{A stratification of a moduli space of polarized abelian varieties 
in positive characteristic,} in: {\sl Moduli of Abelian Varieties}, 
Progr.\ Math. \textbf{195}, Birkh\"auser, 2001, pp.\ 345--416.

\bibitem{P-W-Z1} R.~Pink, T.~Wedhorn, P.~Ziegler:
\textsl{Algebraic zip data.} Documenta Math.\ \textbf{16} (2011), pp.\ 253--300.

\bibitem{P-W-Z2} R.~Pink, T.~Wedhorn, P.~Ziegler:
\textsl{F-zips with additional structure.} Pacific J.\ Math.\ \textbf{274} (2015), 183--236.

\bibitem{Springer2007} 
T.~Springer: 
\textsl{An extension of Bruhat's lemma.}
Journal of Algebra  \textbf{313} (2007), 417--427.

\bibitem{Tamvakis} 
H.~Tamvakis: 
\textsl{Giambelli degeneracy locus formulas
for classical $G/P$ spaces.} Moscow Math.\ Journal \textbf{16} (2016), 125--177.

\bibitem{Vasiu}
A.~Vasiu: 
\textsl{Integral canonical models of Shimura varieties of preabelian type,}
Asian J.\ Math.\ \textbf{3} (1999), 401--518.

\bibitem{V-W} 
E.~Viehmann and T.~Wedhorn: 
\textsl{Ekedahl-Oort and Newton strata
for Shimura varieties of PEL type.} Math.\ Ann.\ \textbf{356} (2013),
1493--1550.

\bibitem{We-Zi}
T.~Wedhorn, P.~Ziegler:
\textsl{Tautological rings of Shimura varieties and cycle classes of Ekedahl-Oort strata.}
\url{https://arxiv.org/pdf/1811.04843.pdf}

\bibitem{Zhang}
C.~Zhang:
\textsl{Ekedahl-Oort strata for good reductions of Shimura varieties of Hodge type.}
Canad.\ J.\ Math.\ \textbf{70} (2018), 451--480. 
\url{https://arxiv.org/pdf/1312.4869.pdf}.

\end{thebibliography}
\end{document}